\tikzset{Bullet/.style={fill=black,draw,color=#1,circle,minimum size=20pt,scale=.5}}
\NewDocumentEnvironment{graphsix}{O{1em}+b} 
  {%
    \par\addvspace{8pt}            
    \begingroup
      \ttfamily\small
      \setlength{\leftskip}{#1}    
      \setlength{\parindent}{0pt}  
      \rightskip=0pt plus 1fil
      \seqsplit{#2}\par           
  }{%
    \par\addvspace{8pt}            
    \endgroup
  }
\newtheorem{theorem}{Theorem}
\newtheorem{definition}[theorem]{Definition}
\newtheorem{lemma}[theorem]{Lemma}
\newtheorem{conjecture}[theorem]{Conjecture}
\newtheorem{remark}[theorem]{Remark}
\newcolumntype{T}[1]{>{\raggedright\arraybackslash}p{#1}}
\DeclareMathOperator{\p}{P}
\DeclareMathOperator{\cyp}{CP}
\DeclareMathOperator{\gp}{GP}
\DeclareMathOperator{\cgp}{CGP}
\DeclareMathOperator{\sgp}{SGP}
\newcommand{\ZZ}{\mathbb{Z}}
\newcommand{\mL}{\mathcal{L}}
\newcommand{\bG}{\mathbf{G}}
\title{\bfseries\Large Infinitely many counterexamples\\[2pt]
        to a conjecture of Lov\'asz}
\author[1,2]{Aida~Abiad}
\author[3]{Frederik~Garbe}
\author[4]{Xavier~Povill}
\author[5]{Christoph~Spiegel}
\affil[ ]{%
  \vspace{0.8em}\mbox{}    
  \scriptsize {\tt a.abiad.monge@tue.nl} \,
          {\tt garbe@informatik.uni-heidelberg.de} \,
          {\tt xavier.povill@upc.edu} \,
          {\tt spiegel@zib.de}}
\affil[1]{Department of Mathematics and Computer Science, Eindhoven University of Technology}
\affil[2]{Department of Mathematics and Data Science, Vrije Universiteit Brussel}
\affil[3]{Institute of Computer Science, Heidelberg University}
\affil[4]{Department of Mathematics, Universitat Politècnica de Catalunya}
\affil[5]{Department AI in Society, Science, and Technology, Zuse Institute Berlin}
\date{}
\begin{document}

\maketitle

\begin{abstract}
Motivated by Ryser’s conjecture, which bounds the minimum size of a vertex cover in terms of the matching number in $r$-partite hypergraphs, Lov\'asz conjectured the stronger statement in 1975 that one can always reduce the matching number by removing $r-1$ vertices.
Clow, Haxell, and Mohar very recently disproved this for $r = 3$ using the explicit counterexample of a line hypergraph of a $3$-regular graph of order $102$.
We construct the first infinite family of counterexamples for $r = 3$,
the smallest of which is the line hypergraph of a graph of order only $22$.
In addition, we give the first counterexamples for $r = 4$ derived by computationally exploring a special class of permutation graphs.

\end{abstract}

\section{Introduction}

Let $H$ be an $r$-uniform hypergraph.
The \emph{matching number} $\nu(H)$ is the maximum number of pairwise disjoint edges and the \emph{vertex cover number} $\tau(H)$ is the minimum cardinality of a set of vertices intersecting every edge of $H$.
It follows immediately, that
\begin{equation*}
    \nu(H) \leq \tau(H) \leq r \, \nu (H),
\end{equation*}
and both inequalities are easily shown to be tight in general.
However, for the particular case of \emph{$r$-partite} $H$, i.e., if there exists a partition  $V_H = V_1\sqcup\ldots\sqcup V_r$ of the vertex set such that $|e\cap V_i|=1$ for every $1 \le i \le r$ and $e\in E_H$, Ryser conjectured, first explicitly stated in Henderson's 1971 dissertation~\cite{Hen71}, that
\begin{equation*}
    \tau(H)\leq (r-1) \, \nu(H).
\end{equation*}
For $r = 2$ this is true due to K\H onig's theorem and for $r=3$ it was proven by Aharoni~\cite{Aha2001}. Beyond this, the conjecture is only known to hold for specific families of hypergraphs~\cite{FraHerMcKWan17,BisDasMorSza21}. 

A potential proof strategy for Ryser's conjecture could consist of constructing the vertex cover by iteratively removing $r-1$ vertices in such a way that it decreases the matching number. Lov\'asz,  in his 1975 dissertation~\cite{Lov75} and later popularized through a 1988 survey of F\"uredi~\cite{furedi1988matchings}, conjectured that this should in fact be possible.

\begin{conjecture}(Lov\'asz 1975)\label{conj:Lovasz}
    For any $r$-partite hypergraph containing at least one edge, there exists a set of $r-1$ vertices whose deletion reduces its matching number.
\end{conjecture}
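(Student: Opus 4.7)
Since the paper's title announces counterexamples, my goal is not to prove Conjecture~\ref{conj:Lovasz} but to falsify it for $r=3$ and exhibit an infinite family; the $r=4$ case I would defer to a computer search, as the abstract suggests. The natural object, already responsible for the Clow--Haxell--Mohar example, is the line hypergraph $H(G,c)$ of a properly $3$-edge-coloured cubic graph $G$: its vertex set is $E(G)$ (partitioned by the three colour classes) and its hyperedges are the triples $\partial_G(v)$ of edges meeting each vertex $v$. A matching in $H(G,c)$ is exactly an independent set of $G$, and deleting $S\subseteq V(H(G,c))$, viewed as a set of edges of $G$, kills precisely those hyperedges incident to some vertex of $V(S):=\bigcup_{e\in S}e$, so $\nu\bigl(H(G,c)-S\bigr)=\alpha\bigl(G-V(S)\bigr)$.

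With this translation, Conjecture~\ref{conj:Lovasz} applied to $H(G,c)$ asks for a pair of edges $e_1,e_2$ such that every maximum independent set of $G$ meets $V(e_1)\cup V(e_2)$. A counterexample is therefore a cubic class-$1$ graph with the \emph{$4$-robustness} property: for every two edges there is a maximum independent set avoiding all of their (at most four) endpoints. I would first hunt for a small such graph among highly symmetric cubic class-$1$ families --- generalized Petersen graphs $GP(n,k)$, circulants $C_n(1,k)$, prisms and Möbius ladders --- testing $4$-robustness by brute force; the abstract flags order $22$, well within a short computer search.

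The main obstacle is lifting a single example to an infinite family: disjoint unions destroy the global independence arithmetic, and naive blow-ups break $3$-edge-colourability. I would aim for a vertex-replacement gadget that swaps each vertex of a base graph for a rigid $3$-edge-coloured cell carrying three external coloured half-edges, chosen so that every maximum independent set of the resulting graph decomposes as a union of gadget-local maxima. Iterating would produce arbitrarily large candidates, and the crucial step is to prove that $4$-robustness is preserved under one replacement --- essentially, that any ``bad'' pair of deleted edges can be absorbed inside a single gadget copy without global loss of independence. Alternatively, if the small example sits inside a parametric family such as $GP(n,k)$ with $k$ chosen in terms of $n$, I would try to establish $4$-robustness uniformly in $n$ using closed-form expressions for $\alpha$ in that family.
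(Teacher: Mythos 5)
Your reduction is correct and is exactly the paper's: the identification $\nu(\mL - S)=\alpha\bigl(G-\bigcup_{e\in S}e\bigr)$ for a properly $3$-edge-coloured cubic $G$, and the resulting ``4-robustness'' criterion, is \autoref{lema:indpnumbercondition}; the brute-force hunt that flags a $22$-vertex example is how the paper found $\gp(11,2)$, and your second alternative (a parametric generalized Petersen family with known independence number) is precisely the route the paper takes, via $\gp(5k+11,2)$ and $\alpha(\gp(n,2))=\lfloor 4n/5\rfloor$.

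The gap is that the decisive step is never supplied. A closed-form for $\alpha(\gp(n,2))$ does not by itself give robustness: for every pair of edges $uv,wx$ you must exhibit a \emph{maximum} independent set avoiding $\{u,v,w,x\}$, and neither of your two routes produces one. The paper's argument (\autoref{thm:petersen_are_counterexamples}) is an induction on $k$ exploiting the rung structure: every edge of $\gp(n,2)$ lies within $3$ consecutive rungs, so for $n\ge 26$ there are $10$ consecutive rungs disjoint from $\{u,v,w,x\}$ (\autoref{lemma:petersen_find_free_consec_rungs}); delete $5$ of them and re-glue to get $\gp(n-5,2)$, apply the inductive hypothesis to obtain a maximum independent set avoiding the four vertices, then reinsert the $5$ rungs and add the four vertices $a_{i_1+1},b_{i_1+2},b_{i_1+3},a_{i_1+4}$ adjacent to a rung $R_{i_1}$ that the old set misses --- such a rung exists because $5$ consecutive rungs carry at most $4$ independent vertices (\autoref{lemma:petersen_segment_of_length_5_has_independence_number_4}). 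The gain of exactly $4=\alpha(\gp(n,2))-\alpha(\gp(n-5,2))$ is what closes the induction; nothing in your sketch plays this role. Your primary plan, a vertex-replacement gadget under which maximum independent sets decompose gadget-locally and 4-robustness is preserved, is stated as a hope rather than an argument: no gadget is proposed, maximum independent sets generally do trade vertices across gadget boundaries, and you would additionally have to preserve $3$-edge-colourability (for the GP family this is supplied by Castagna--Prins, \autoref{rmk:edgecol}). So the framework and even the family are right, but the core theorem of the paper is missing from the proposal.
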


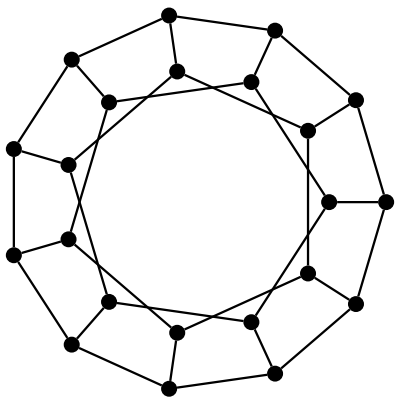
\begin{figure}[htp]
\centering
\begin{tikzpicture}[scale=0.5]
  \def\N{11}        
  \def\step{2}      
  \def\Rout{5}      
  \def\Rin{3.5}     

  \tikzset{
    VertexStyle/.style = {shape=circle, fill=black,
                          minimum size = 6pt, inner sep = 0pt}
  }

  \foreach \i in {0,...,\numexpr\N-1} {%
    \pgfmathsetmacro\theta{360/\N*\i}

    \pgfmathsetmacro\xO{\Rout*cos(\theta)}
    \pgfmathsetmacro\yO{\Rout*sin(\theta)}
    \Vertex[NoLabel,x=\xO cm,y=\yO cm]{u\i}

    \pgfmathsetmacro\xI{\Rin*cos(\theta)}
    \pgfmathsetmacro\yI{\Rin*sin(\theta)}
    \Vertex[NoLabel,x=\xI cm,y=\yI cm]{v\i}
  }

  \foreach \i in {0,...,\numexpr\N-1} {%
    \pgfmathtruncatemacro\next{mod(\i+1,\N)}   
    \pgfmathtruncatemacro\jump{mod(\i+\step,\N)}

    \Edge[lw=0.03cm](u\i)(u\next)

    \Edge[lw=0.03cm](u\i)(v\i)

    \Edge[lw=0.03cm](v\i)(v\jump)
  }
\end{tikzpicture}
\caption{The $3$-regular generalized Petersen graph $\gp (11,2)$ on $22$ vertices whose line hypergraph is a counterexample for Lov\'asz' conjecture for $r = 3$. It generalizes to the infinite family of graphs $\gp (5k + 11,2)$ whose line hypergraphs are counterexamples to Lov\'asz' conjecture for $r = 3$ and any $k \ge 0$.}
\label{fig:counterxample22vertices}
\end{figure}

The case $r=2$ also follows from K\H onig's theorem, since removing any vertex from a minimum vertex cover creates a graph with vertex cover number, and therefore also matching number, reduced by one. However, despite the result of Aharoni~\cite{Aha2001}, this conjecture remained open even for $r = 3$ until very recently.  As it turns out, with good reason: Clow, Haxell, and Mohar~\cite{CHM2025} found two counterexamples for the case $r=3$, namely the line hypergraphs of the Biggs-Smith graph and {\tt F168D}\footnote{A {\tt graph6} string and some properties of the {\tt F168D} graph can be found at \url{http://atlas.gregas.eu/graphs/65}.}, two cubic graphs of respective order $102$ and $168$. Several natural questions arise:

\begin{enumerate}\itemsep0pt
    \item Can one can construct an infinite family of counterexamples?~\cite[Question 4.1]{CHM2025}
    \item Are there counterexamples for any $r \ge 4$?~\cite[Question 4.2]{CHM2025}
    \item Are the properties of these two graphs, in particular that they are not Cayley, relevant to this application?~\cite[Question 4.4]{CHM2025}.
\end{enumerate}

In this paper we answer all the above questions. In particular, we give a positive answer to the first question by showing that the line hypergraphs of the generalized Petersen graphs $\gp (5k + 11,2)$ form such an infinite family for any $k \ge 0$. Note that the smallest counterexample in this family is of order $33$ coming from a cubic graph of order $22$, see Figure~\ref{fig:counterxample22vertices}, making it significantly smaller and much easier to analyze than the line hypergraph of the Biggs-Smith graph.

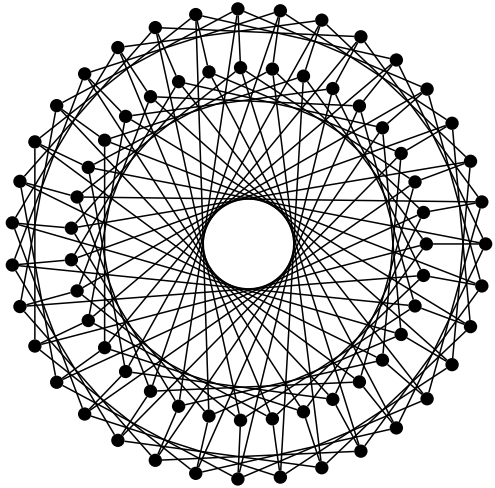
\begin{figure}[htp]
\centering
\begin{tikzpicture}[scale=0.52]
  \def\N{35}       
  \def\kOut{5}     
  \def\kIn{7}      
  \def\shift{15}   
  \def\Rout{6.0}   
  \def\Rin{4.5}    
  \tikzset{
    VertexStyle/.style = {shape=circle, fill=black,
                          minimum size=5pt, inner sep=0pt}
  }
  \foreach \i in {0,...,\numexpr\N-1} {
    \pgfmathsetmacro\ang{360/\N*\i}

    \pgfmathsetmacro\xO{\Rout*cos(\ang)}
    \pgfmathsetmacro\yO{\Rout*sin(\ang)}
    \Vertex[NoLabel,x=\xO cm,y=\yO cm]{u\i}

    \pgfmathsetmacro\xI{\Rin*cos(\ang)}
    \pgfmathsetmacro\yI{\Rin*sin(\ang)}
    \Vertex[NoLabel,x=\xI cm,y=\yI cm]{v\i}
  }

  \foreach \i in {0,...,\numexpr\N-1} {
    \pgfmathtruncatemacro\oNext{mod(\i+\kOut,\N)}
    \pgfmathtruncatemacro\iNext{mod(\i+\kIn,\N)}
    \pgfmathtruncatemacro\cross{mod(\i+\shift,\N)}

    \Edge[lw=0.02cm](u\i)(u\oNext)
    \Edge[lw=0.02cm](v\i)(v\iNext)
    \Edge[lw=0.02cm](u\i)(v\i)
    \Edge[lw=0.02cm](u\i)(v\cross)
  }
\end{tikzpicture}
\caption{The $4$-regular graph
\(\cgp (2,\ZZ_{35};5,7;\,15,0)\)
on $70$ vertices whose line hypergraph is a counterexample to Lov\'asz' conjecture for $r = 4$.}
\label{fig:counterxample70vertices}
\end{figure}

\begin{theorem}\label{thm:main}
    There exists an infinite family of cubic graphs whose line hypergraphs are counterexamples to Lov\'asz' conjecture for $r = 3$. 
\end{theorem}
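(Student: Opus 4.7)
Set $n := 5k+11$ and $G_k := \gp(n,2)$, a cubic graph on $2n$ vertices, and write $H_k := L(G_k)$ for its line hypergraph. Because $n \neq 5$, the graph $G_k$ is Class~$1$ (the only cubic generalized Petersen graph $\gp(n,2)$ failing to be $3$-edge-colorable is the Petersen graph $\gp(5,2)$ itself), so $H_k$ is genuinely $3$-partite and Lov\'asz' conjecture applies to it. Two vertex stars $E(u), E(v)$ are disjoint in $H_k$ precisely when $uv \notin E(G_k)$, so matchings in $H_k$ correspond bijectively to independent sets in $G_k$, giving $\nu(H_k) = \alpha(G_k)$. Under the (strong) deletion convention appropriate to Lov\'asz' conjecture, deleting two vertices $e_1 = u_1v_1$ and $e_2 = u_2v_2$ of $H_k$ removes the (up to four) vertex stars at $W := \{u_1,v_1,u_2,v_2\}$, so $\nu(H_k \setminus \{e_1,e_2\}) = \alpha(G_k \setminus W)$. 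The theorem therefore reduces to the purely graph-theoretic statement that for every $k \ge 0$ and every such $W$,
\[
    \alpha(G_k \setminus W) \;=\; \alpha(G_k),
\]
i.e., some maximum independent set of $G_k$ avoids $W$.

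The plan is to build an explicit, rotation-rich family $\mathcal{I}_k$ of maximum independent sets of $G_k$ from a periodic template of period~$5$ on the outer cycle $u_0 \cdots u_{n-1}$ and its paired inner cycle. The parametrisation $n = 5k+11$ is chosen so that $n \equiv 1 \pmod{5}$, and incrementing $k$ to $k+1$ effectively inserts one additional $5$-block, allowing the same template to function uniformly. I expect the formula $\alpha(G_k) = \lfloor 4n/5 \rfloor$ (or a closely related closed form), with the upper bound coming from a discharging argument along the two $2$-factors that partition $E(G_k)$ and the matching lower bound furnished by the template. The $n$ cyclic shifts of this template, together with a small number of local variants near one block, should yield a family $\mathcal{I}_k$ in which every vertex is avoided by a positive fraction of its members.

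With $\mathcal{I}_k$ in hand, I would use the $\ZZ_n$-rotational symmetry of $G_k$ to normalise $(e_1,e_2)$ so that $e_1$ lies in one of three fixed orbits (outer, spoke, inner) and the offset of $e_2$ is either small or generic. For each of the finitely many small-offset configurations (independent of $k$ beyond a threshold) I would exhibit by hand a shift of the template lying in $\mathcal{I}_k$ that misses $W$; for the generic case a rotation-averaging argument shows that strictly fewer than $|\mathcal{I}_k|$ patterns can hit any fixed $4$-element set, and pigeonhole produces the required avoiding member. The base case $k = 0$ yields the $22$-vertex example of Figure~\ref{fig:counterxample22vertices}, which can additionally be verified by computer as a sanity check.

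The main obstacle will be the construction of $\mathcal{I}_k$ with all the right properties simultaneously: explicit enough to analyse, provably maximum, and sufficiently ``spread'' so that every admissible $W$ is avoided by some member. A secondary difficulty is the parity split in $\gp(n,2)$ — when $n$ is odd the inner vertices form a single $n$-cycle, whereas for $n$ even they form two $(n/2)$-cycles — which is liable to force a mild bifurcation of the template; however, the step of~$5$ in the family is compatible with both regimes, so a single period-$5$ pattern should fit, and the argument should go through uniformly in $k$.
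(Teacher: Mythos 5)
Your reduction is the same as the paper's: show that $\gp(5k+11,2)$ is $3$-edge-colorable (Castagna--Prins) and that $\alpha(G_k - W)=\alpha(G_k)$ for every set $W$ of endpoints of two edges, which by the correspondence between independent sets of $G_k$ and matchings of its line hypergraph makes the latter a counterexample. The gap is in the core step: you never construct the family $\mathcal{I}_k$, and the mechanism you propose for the ``generic'' case does not work quantitatively. A maximum independent set of $\gp(n,2)$ has $\lfloor 4n/5\rfloor$ of the $2n$ vertices, so among the $n$ cyclic shifts of a single period-$5$ template each outer vertex lies in $x$ shifts and each inner vertex in $y$ shifts with $x+y=\lfloor 4n/5\rfloor$. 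For a single spoke edge $a_ib_i$ the number of shifts meeting it is already $x+y\approx 4n/5$, i.e.\ only about $n/5$ shifts avoid \emph{one} edge; for two edges the union bound gives up to $\approx 8n/5>n$ hitting shifts, so pigeonhole/rotation-averaging cannot conclude that some member of a family of size $n+O(1)$ avoids all four endpoints. Adding ``a small number of local variants'' does not change this arithmetic, and the defect forced by $n\equiv 1\pmod 5$ (a period-$5$ pattern cannot tile $\ZZ_n$) is acknowledged but not resolved. So as written the proposal is a plan whose decisive step is both missing and, in the form sketched, numerically untenable.

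For contrast, the paper proves $\alpha(G-\{u,v,w,x\})=\alpha(G)$ by induction on $k$ rather than by exhibiting a fixed spread-out family: the cases $k=0,1,2$ are checked by computer; for the step one finds $10$ consecutive rungs disjoint from $\{u,v,w,x\}$ (possible once $n\ge 26$, since the four endpoints span at most six rungs), deletes five of those rungs and re-glues to obtain a copy of $\gp(5k+11,2)$, applies the inductive hypothesis to get a maximum independent set $\tilde I$ avoiding the four vertices, and then re-inserts the five rungs at a rung that $\tilde I$ leaves empty (such a rung exists because any five consecutive rungs contain at most four independent vertices), adding four new vertices $a_{i_1+1},b_{i_1+2},b_{i_1+3},a_{i_1+4}$. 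This surgery produces, for each prescribed $W$, a tailored maximum independent set avoiding it, which is exactly the flexibility your fixed template family lacks. If you want to salvage your approach, you would need either an exponentially rich, explicitly analysable family of maximum independent sets or a local-modification argument — and the latter is essentially the paper's induction.
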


We also give a first answer to the second question by providing several specific counterexamples for $r=4$, likewise derived as the line hypergraphs of regular graphs.
These constructions are significantly more complex to describe and the smallest of them has order $140$ coming from a quartic graph of order $70$, see \autoref{fig:counterxample70vertices}.

\begin{theorem}\label{thm:r=4}
    There exist quartic graphs whose line hypergraphs are counterexamples to Lov\'asz' conjecture for $r = 4$. 
\end{theorem}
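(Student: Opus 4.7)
The plan is to verify that the explicit quartic graph $G = \cgp(2,\ZZ_{35};5,7;15,0)$ depicted in Figure~\ref{fig:counterxample70vertices} is a counterexample. First I would translate the hypergraph condition into a purely graph-theoretic one on $G$. Since $G$ is $4$-regular, each hyperedge of the line hypergraph $L(G)$ has size $4$, and the stars at two vertices $u,v$ of $G$ are disjoint iff $uv\notin E(G)$; hence $\nu(L(G)) = \alpha(G)$. Deleting a set $F\subseteq E(G)$ of three edges from $L(G)$ (as three vertices of the hypergraph) yields a hypergraph whose matching number equals $\alpha(G-F)$. Thus $L(G)$ is a counterexample precisely if $\alpha(G-F)=\alpha(G)$ for every $F\subseteq E(G)$ with $|F|=3$, or equivalently, every vertex set $S\subseteq V(G)$ with $|S|=\alpha(G)+1$ induces at least four edges of $G$. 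Moreover, $L(G)$ is $4$-partite iff $G$ has chromatic index $4$, which can be certified by exhibiting a proper $4$-edge-coloring.

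The concrete steps I would carry out are: (i) write down $G$ explicitly with vertex set $\{u_i,v_i:i\in\ZZ_{35}\}$ and edges $u_iu_{i+5}$, $v_iv_{i+7}$, $u_iv_i$, and $u_iv_{i+15}$ (all indices modulo $35$), and confirm $4$-regularity by counting incidences at each vertex; (ii) present a proper $4$-edge-coloring of $G$, which gives the partition of $V(L(G))$ making $L(G)$ into a $4$-partite hypergraph; (iii) compute $\alpha(G)$ with an exact maximum-independent-set solver; (iv) verify the key condition that no three-edge deletion strictly increases $\alpha$. Only step~(iv) is non-routine, and it is precisely the content of the theorem.

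The main obstacle is step~(iv). A direct enumeration examines $\binom{140}{3}\approx 4.5\cdot 10^5$ edge triples and, for each one, solves an independent-set problem on $70$ vertices; this is feasible with a modern ILP or maximum-clique solver but is unilluminating. I would therefore exploit the large automorphism group of $G$, which contains the cyclic shift $i\mapsto i+1$ of order $35$ together with the natural reflections coming from the circulant structure, reducing the check to orbit representatives of unordered edge triples and cutting the work by roughly two orders of magnitude. An alternative and often faster route is to work directly with the reformulation above: enumerate all vertex subsets of size $\alpha(G)+1$ inducing at most three edges (conveniently accessed by extending near-maximum independent sets by one or two vertices) and certify that the list is empty. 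The graph $G$ itself was found by computationally sweeping over the parametrised class of permutation-type circulant graphs $\cgp(2,\ZZ_n;\,\cdot\,,\,\cdot\,;\,\cdot\,,0)$ for small $n$ and filtering by the conditions above, so the proof of Theorem~\ref{thm:r=4} is necessarily computer-assisted and its main novelty lies in the identification of the right search class.
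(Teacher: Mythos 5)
Your overall plan --- take the explicit quartic graph $\cgp(2,\ZZ_{35};5,7;15,0)$, certify that it is $4$-regular and $4$-edge-colorable, and then verify a purely graph-theoretic condition computationally over all (orbits of) triples of edges --- is exactly the route the paper takes, which checks the hypothesis of \autoref{lema:indpnumbercondition} by computer for the graphs listed in \autoref{tab:r4examples}. However, your translation of the deletion condition is wrong, and this is fatal to step~(iv). When you delete three vertices $F=\{e_1,e_2,e_3\}$ of $\mL(G)$ (i.e.\ three edges of $G$), every hyperedge of $\mL(G)$ that contains a deleted vertex is removed, not truncated: a matching of $\mL(G)-F$ may only use stars $S(v)$ with $S(v)\cap F=\emptyset$, that is, vertices $v$ that are not endpoints of $e_1,e_2,e_3$. (This reading is forced by the link to Ryser's conjecture --- the deleted vertices are meant to join a cover, so only hyperedges disjoint from them survive --- and it is how the paper computes $\nu(\mL-S)$ in the proof of \autoref{lema:indpnumbercondition}.) Consequently $\nu(\mL(G)-F)=\alpha\big(G-(e_1\cup e_2\cup e_3)\big)$, the independence number after deleting the up to six \emph{endpoints} from $G$, and not $\alpha(G-F)$ with the three edges deleted. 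Your ``equivalently'' clause (every set of $\alpha(G)+1$ vertices induces at least four edges) shows you are using the edge-deletion reading, which corresponds to truncating hyperedges; under that semantics removing $r-1$ vertices from an $r$-uniform hypergraph can never decrease the matching number at all, so \autoref{conj:Lovasz} would be vacuously false for every such hypergraph --- a sign that this cannot be the intended notion of deletion.

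The two conditions are genuinely inequivalent, and yours is not sufficient. Already for the analogous case $r=2$: the graph $C_4$ satisfies your criterion (any three of its vertices induce two edges), yet its line hypergraph (again $C_4$, which is bipartite) is certainly not a counterexample to \autoref{conj:Lovasz} for $r=2$, since deleting one vertex drops the matching number from $2$ to $1$, in accordance with K\H{o}nig's theorem; the correct condition fails there because no maximum independent set of $C_4$ avoids both endpoints of a given edge. So even if your computation in step~(iv) succeeded, it would not establish \autoref{thm:r=4}. The fix is simple: in step~(iv) check, for each triple of edges $F$ (up to the automorphisms you describe), that $\alpha\big(G-\bigcup_{e\in F}e\big)=\alpha(G)$, i.e.\ that some maximum independent set of $G$ avoids all endpoints of the three chosen edges. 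With that replacement, your steps (i)--(iv) coincide with the paper's computer-assisted verification of the hypothesis of \autoref{lema:indpnumbercondition} for the quartic graphs of \autoref{tab:r4examples}, of which the graph in \autoref{fig:counterxample70vertices} is the smallest.
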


We can also give an answer to the third question: while the generalized Petersen graphs we use for \autoref{thm:main} are not Cayley, we did find a specific example of a cubic Cayley graph on $36$ vertices, see Figure~\ref{fig:counterxample36vertices}, as well as a quartic Cayley graph on $96$ vertices.
Furthermore, while the Biggs-Smith graph and {\tt F168}D are both edge-transitive (and the Biggs-Smith graph also distance-transitive),
none of our examples are. The Biggs-Smith graph and {\tt F168}D are also Hamiltonian while the members $\gp (5 k + 11, 2)$ of our infinite family are not whenever $k \equiv 0 \mod 6$.

\begin{figure}[htp]
\centering
\definecolor{light_red}{RGB}{254,199,184}
\begin{tikzpicture}[scale=0.5]
\tikzset{VertexStyle/.style = {shape = circle,fill = black,minimum size = 6pt, inner sep = 0pt}}
\Vertex[NoLabel,x=2.5cm,y=0.6699cm]{v0}
\Vertex[NoLabel,x=1.7861cm,y=1.1698cm]{v1}
\Vertex[NoLabel,x=3.2899cm,y=0.3015cm]{v2}
\Vertex[NoLabel,x=7.5cm,y=9.3301cm]{v3}
\Vertex[NoLabel,x=9.924cm,y=5.8682cm]{v4}
\Vertex[NoLabel,x=9.6985cm,y=6.7101cm]{v5}
\Vertex[NoLabel,x=1.1698cm,y=8.2139cm]{v6}
\Vertex[NoLabel,x=8.2139cm,y=1.1698cm]{v7}
\Vertex[NoLabel,x=1.7861cm,y=8.8302cm]{v8}
\Vertex[NoLabel,x=8.8302cm,y=1.7861cm]{v9}
\Vertex[NoLabel,x=0.076cm,y=4.1318cm]{v10}
\Vertex[NoLabel,x=0.3015cm,y=3.2899cm]{v11}
\Vertex[NoLabel,x=2.5cm,y=9.3301cm]{v12}
\Vertex[NoLabel,x=9.3301cm,y=2.5cm]{v13}
\Vertex[NoLabel,x=0.6699cm,y=7.5cm]{v14}
\Vertex[NoLabel,x=7.5cm,y=0.6699cm]{v15}
\Vertex[NoLabel,x=10.0cm,y=5.0cm]{v16}
\Vertex[NoLabel,x=9.3301cm,y=7.5cm]{v17}
\Vertex[NoLabel,x=0.6699cm,y=2.5cm]{v18}
\Vertex[NoLabel,x=0.0cm,y=5.0cm]{v19}
\Vertex[NoLabel,x=8.8302cm,y=8.2139cm]{v20}
\Vertex[NoLabel,x=9.924cm,y=4.1318cm]{v21}
\Vertex[NoLabel,x=9.6985cm,y=3.2899cm]{v22}
\Vertex[NoLabel,x=3.2899cm,y=9.6985cm]{v23}
\Vertex[NoLabel,x=6.7101cm,y=0.3015cm]{v24}
\Vertex[NoLabel,x=0.3015cm,y=6.7101cm]{v25}
\Vertex[NoLabel,x=5.0cm,y=10.0cm]{v26}
\Vertex[NoLabel,x=5.0cm,y=0.0cm]{v27}
\Vertex[NoLabel,x=5.8682cm,y=9.924cm]{v28}
\Vertex[NoLabel,x=5.8682cm,y=0.076cm]{v29}
\Vertex[NoLabel,x=4.1318cm,y=0.076cm]{v30}
\Vertex[NoLabel,x=4.1318cm,y=9.924cm]{v31}
\Vertex[NoLabel,x=6.7101cm,y=9.6985cm]{v32}
\Vertex[NoLabel,x=8.2139cm,y=8.8302cm]{v33}
\Vertex[NoLabel,x=0.076cm,y=5.8682cm]{v34}
\Vertex[NoLabel,x=1.1698cm,y=1.7861cm]{v35}
\Edge[lw=0.03cm](v0)(v1)
\Edge[lw=0.03cm](v0)(v2)
\Edge[lw=0.03cm](v0)(v3)
\Edge[lw=0.03cm](v1)(v35)
\Edge[lw=0.03cm](v1)(v31)
\Edge[lw=0.03cm](v2)(v34)
\Edge[lw=0.03cm](v2)(v30)
\Edge[lw=0.03cm](v3)(v32)
\Edge[lw=0.03cm](v3)(v33)
\Edge[lw=0.03cm](v4)(v16)
\Edge[lw=0.03cm](v4)(v5)
\Edge[lw=0.03cm](v4)(v6)
\Edge[lw=0.03cm](v5)(v17)
\Edge[lw=0.03cm](v5)(v7)
\Edge[lw=0.03cm](v6)(v8)
\Edge[lw=0.03cm](v6)(v14)
\Edge[lw=0.03cm](v7)(v9)
\Edge[lw=0.03cm](v7)(v15)
\Edge[lw=0.03cm](v8)(v11)
\Edge[lw=0.03cm](v8)(v12)
\Edge[lw=0.03cm](v9)(v10)
\Edge[lw=0.03cm](v9)(v13)
\Edge[lw=0.03cm](v10)(v19)
\Edge[lw=0.03cm](v10)(v11)
\Edge[lw=0.03cm](v11)(v18)
\Edge[lw=0.03cm](v12)(v23)
\Edge[lw=0.03cm](v12)(v15)
\Edge[lw=0.03cm](v13)(v22)
\Edge[lw=0.03cm](v13)(v14)
\Edge[lw=0.03cm](v14)(v25)
\Edge[lw=0.03cm](v15)(v24)
\Edge[lw=0.03cm](v16)(v19)
\Edge[lw=0.03cm](v16)(v21)
\Edge[lw=0.03cm](v17)(v18)
\Edge[lw=0.03cm](v17)(v20)
\Edge[lw=0.03cm](v18)(v35)
\Edge[lw=0.03cm](v19)(v34)
\Edge[lw=0.03cm](v20)(v33)
\Edge[lw=0.03cm](v20)(v23)
\Edge[lw=0.03cm](v21)(v32)
\Edge[lw=0.03cm](v21)(v22)
\Edge[lw=0.03cm](v22)(v30)
\Edge[lw=0.03cm](v23)(v31)
\Edge[lw=0.03cm](v24)(v35)
\Edge[lw=0.03cm](v24)(v29)
\Edge[lw=0.03cm](v25)(v34)
\Edge[lw=0.03cm](v25)(v28)
\Edge[lw=0.03cm](v26)(v27)
\Edge[lw=0.03cm](v26)(v28)
\Edge[lw=0.03cm](v26)(v31)
\Edge[lw=0.03cm](v27)(v29)
\Edge[lw=0.03cm](v27)(v30)
\Edge[lw=0.03cm](v28)(v32)
\Edge[lw=0.03cm](v29)(v33)
\end{tikzpicture}
\caption{A $3$-regular Cayley graph in $\ZZ_{36}$ whose line hypergraph is a counterexample to Lov\'asz' conjecture for $r=3$.}\label{fig:counterxample36vertices}
\end{figure}
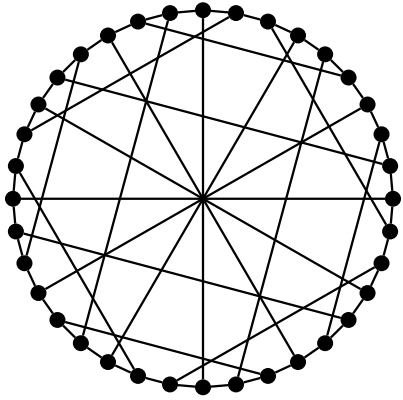

Finally, let us note that none of the examples we found contradict Ryser's conjecture or are even extremal for it, a question that has been relevant in its own right~\cite{AbuBarPokSza19, HaxNarSza18}. This is unsurprising for $r = 3$, as in this case Haxell, Narins, and Szab\'o~\cite{HaxNarSza18} showed that every extremal graph for Ryser's conjecture also fulfills Lov\'asz' conjecture. The fact that this also remains true for our counterexamples for $r = 4$ might indicate that this phenomenon could hold for any uniformity.

\medskip
\noindent\textbf{Outline.}  Section~\ref{sec:preliminaries} recalls basic notions on $r$-uniform hypergraphs, line hypergraphs, and the key statements relating the independence number of the graph to the matching number of the line hypergraph.  
Section~\ref{sec:gpproof} introduces the family of generalized Petersen graphs and proves that their line hypergraphs form infinitely many counterexamples to Lov\'asz’ conjecture for $r=3$, establishing Theorem~\ref{thm:main}.  
Section~\ref{sec:additional} introduces Cayley-generalized Petersen graphs, establishes Theorem~\ref{thm:r=4} by constructing the first quartic counterexamples, and describes the computational search used to find them.
Finally, Section~\ref{sec:conclusion} offers brief concluding remarks and open questions.

\section{Preliminaries} \label{sec:preliminaries}

Let us denote the vertex and edge set of a graph or hypergraph $G$ respectively by $V_G$ and $E_G$.
Given a graph $G$, we let $G[S]$ denote the graph induced by some  $S \subseteq V_G$, $\alpha(G)$ the \emph{independence number}, and $S(v) = \{ e \in E_G \mid v \in E \}$ the \emph{incidence set} of some vertex $v \in V_G$.
\begin{definition}
    The \emph{line hypergraph} $\mL = \mL(G)$ of a graph $G$ is defined as the hypergraph with vertex set $V_\mL = E_G$ whose hyperedges correspond to the incidence sets in $G$, that is $E_{\mL} = \{ S(v) \mid v \in V_G \}$.
\end{definition}
Note that $\mL$ is only uniform if $G$ is regular.
We can go further and also characterize the partiteness of $\mL$ through the edge-colorability of $G$.
\begin{remark}\label{rmk:partcol}
    $\mL$ is $r$-partite if and only if $G$ is $r$-regular and $r$-edge-colorable.
\end{remark}
We will also need the following trivial observation constructing a bijection between independent sets in $G$ and matchings in $\mL$.
\begin{remark}\label{rmk:indmatch}
    Every independent set $I \subseteq V_G$ in a graph $G$ defines a unique matching $M \subseteq E_\mL$ in $\mL$ of equal cardinality through $M = \{S(v)\mid v\in I\}$ and vice-versa.
\end{remark}
The main tool needed to establish our results is the following lemma, which was implicitly stated for the case $r=3$ in~\cite[Lemma 2.2]{CHM2025} and restates the property given in \autoref{conj:Lovasz} for $\mL$ in terms of independent sets in $G$.

\begin{lemma}\label{lema:indpnumbercondition}
    The hypergraph $\mL$ of an $r$-regular and $r$-edge-colorable graph $G$ is a counterexample to \autoref{conj:Lovasz} if 
    \begin{equation*}
        \alpha \Big( G - \bigcup_{e \in S} e \Big) = \alpha(G)
    \end{equation*}
    for any $S \subseteq E_G$ with $|S| = r-1$.
\end{lemma}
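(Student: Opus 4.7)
The plan is to unpack the condition on $\alpha$ into a statement about maximum matchings in $\mathcal{L}$ via Remark~\ref{rmk:indmatch}, and then verify that such a maximum matching avoids the $r-1$ vertices we wish to delete.

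First I would note that by Remark~\ref{rmk:partcol}, under the given hypotheses $\mathcal{L}$ is $r$-uniform and $r$-partite, so Conjecture~\ref{conj:Lovasz} applies to it. The vertex set of $\mathcal{L}$ is $E_G$, so an arbitrary $(r-1)$-vertex subset of $\mathcal{L}$ corresponds precisely to an arbitrary subset $S \subseteq E_G$ with $|S|=r-1$. Showing that $\mathcal{L}$ is a counterexample to Conjecture~\ref{conj:Lovasz} amounts to showing that for every such $S$, one has $\nu(\mathcal{L} - S) \geq \nu(\mathcal{L})$ (the reverse inequality being automatic from vertex deletion).

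Next I would compute $\nu(\mathcal{L})$ via Remark~\ref{rmk:indmatch}, which identifies matchings of $\mathcal{L}$ with independent sets of $G$, so that $\nu(\mathcal{L}) = \alpha(G)$. Fix any $S \subseteq E_G$ with $|S|=r-1$ and write $G_S := G - \bigcup_{e\in S} e$ for the subgraph of $G$ obtained by deleting all endpoints of edges in $S$. By the hypothesis, $\alpha(G_S) = \alpha(G)$, so there is an independent set $I \subseteq V_{G_S} \subseteq V_G$ with $|I| = \alpha(G)$ such that no vertex of $I$ is incident in $G$ to any edge of $S$; equivalently, $S(v) \cap S = \emptyset$ for every $v \in I$. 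Applying Remark~\ref{rmk:indmatch} to $I$ yields a matching $M = \{S(v) \mid v \in I\}$ in $\mathcal{L}$ of size $\alpha(G) = \nu(\mathcal{L})$, and since every hyperedge of $M$ is disjoint from $S$, the matching $M$ is also a matching in $\mathcal{L} - S$. This gives $\nu(\mathcal{L} - S) \geq \alpha(G) = \nu(\mathcal{L})$, as required.

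There is essentially no obstacle here beyond being careful with the translation between the two settings: the only subtlety is recognizing that $\alpha(G_S) = \alpha(G)$ not only asserts the existence of some large independent set but, crucially, one whose vertices all avoid the endpoints of $S$, which is exactly the condition that allows the associated maximum matching in $\mathcal{L}$ to survive the removal of the $r-1$ hypergraph vertices $S$. All other ingredients are direct consequences of the definitions of $\mathcal{L}$, $\nu$, and $\alpha$, together with the two preceding remarks.
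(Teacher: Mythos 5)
Your proposal is correct and follows essentially the same route as the paper: both translate the hypothesis via Remark~\ref{rmk:indmatch} into a maximum independent set of $G$ avoiding all endpoints of the $r-1$ edges in $S$, and observe that the corresponding maximum matching of $\mL$ avoids the vertices $S$ of $\mL$, giving $\nu(\mL - S) = \nu(\mL)$. Your explicit identification $\nu(\mL) = \alpha(G)$ and the inequality-plus-monotonicity phrasing are just slightly more spelled-out versions of the paper's argument.
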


\begin{proof}
    Note that  $\mL$ is $r$-partite by \autoref{rmk:partcol}.
    Let $S \subseteq V_{\mL} = E_G$ with $|S| = r-1$ be arbitrary but fixed.
    The condition given for $G$ with respect to $S$ is equivalent to requiring that there exists a maximum independent set $I \subseteq V_G$ of $G$ such that $I \cap e = \emptyset$ for all $e \in S$, since any independent set in $G$ avoiding $S$ is an independent set in $G - \bigcup_{e \in S} e$ and vice-versa.
    Any such maximum independent set $I$ avoiding $\bigcup_{e \in S} e$ defines a maximum matching $M$ in $\mL$ through \autoref{rmk:indmatch} that also avoids $S$ by construction. It follows that $\nu (\mL - S) = \nu (\mL)$ and, since $S$ was arbitrary, $\mL$ therefore is a counterexample to \autoref{conj:Lovasz}.
\end{proof}

\section{Proof of \autoref{thm:main}}\label{sec:gpproof}

%
Out infinite family consists of generalized Petersen graphs, which were introduced in 1950 by Coxeter~\cite{C1950} and later given their named in 1969 by Watkins~\cite{W1969}.

\begin{definition}
    Given $n \geq 3$ and $k \in \ZZ_n$, the \emph{generalized Petersen graph} $\gp (n,k)$ is the connected cubic graph with vertex set $V_{\gp (n, k)} = \{ a_i \mid i \in \ZZ_n\} \sqcup \{b_i \mid i \in \ZZ_n\}$ and edge set
    \begin{equation*}
        E_{\gp (n, k)} = \{a_i b_i \mid i \in \ZZ_n\} \sqcup \{a_i a_{i+1} \mid i \in \ZZ_n\} \sqcup \{b_i b_{i+k} \mid i \in \ZZ_n\}.
    \end{equation*}
    For a given $i \in \ZZ_n$, we call $R_i := \{a_i, b_i\}$ the \emph{$i$-th rung} and define $S_{i,\ell} := \bigcup_{i}^{i + \ell - 1} R_j$ as the union of $\ell \in \ZZ_n$ consecutive rungs starting at $i$.
\end{definition}


In order to prove \autoref{thm:main}, it suffices to construct an infinite family of cubic graphs satisfying the hypotheses from \autoref{lema:indpnumbercondition}. We will show that the generalized Petersen graphs $\gp (5k + 11, 2)$ form such an infinite family for $k \geq 0$. We will use the following known structural results. 

\begin{theorem}\cite[Theorem 2.5]{FGS2011}
$\alpha\big( \gp (n,2) \big)=\lfloor 4n / 5\rfloor$ for any $n\geq 5$.
\end{theorem}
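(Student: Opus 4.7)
The plan is to prove matching upper and lower bounds on $\alpha(\gp(n,2))$. Parameterize an independent set $I$ by the subsets $A = \{i \in \ZZ_n : a_i \in I\}$ and $B = \{i \in \ZZ_n : b_i \in I\}$, so $|I| = |A| + |B|$. The three edge types of $\gp(n,2)$ translate into three constraints: $A$ and $B$ are disjoint (spokes $a_i b_i$), no two indices of $A$ differ by $1$ modulo $n$ (outer cycle), and no two indices of $B$ differ by $2$ modulo $n$ (inner rungs).

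For the upper bound, the central claim is that in any five cyclically consecutive rungs $R_i, R_{i+1}, \dots, R_{i+4}$ at most four can be \emph{occupied}, i.e., contribute a vertex to $I$. Were all five occupied, labelling them by their types $t_j \in \{A, B\}$ would produce a length-$5$ word simultaneously avoiding $AA$ in consecutive positions and $B \cdot B$ in positions differing by $2$. A short case split on $t_0$, with each branch forcing the remaining $t_j$'s and terminating in a contradiction, shows that no such word exists. Summing the four-per-window bound over all $n$ cyclic length-$5$ windows and noting that each rung appears in exactly five of them yields $5|I| \leq 4n$, so $|I| \leq \lfloor 4n/5 \rfloor$.

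For the lower bound, I would exhibit an explicit independent set via the periodic template $(A, B, B, A, 0)$ of length $5$: at positions $\equiv 0, 3 \pmod 5$ pick the outer vertex $a_i$, at positions $\equiv 1, 2 \pmod 5$ pick the inner vertex $b_i$, and at positions $\equiv 4 \pmod 5$ pick nothing. A direct computation on residues modulo $5$ shows that no two $A$-positions differ by $\pm 1$ modulo $n$ and no two $B$-positions differ by $\pm 2$ modulo $n$, so for $n = 5q$ the template tiles cyclically and yields $|I| = 4q$. For $n = 5q + r$ with $r \in \{1, 2, 3, 4\}$ I would tile the first $5q$ rungs and append one of the tail blocks $(0)$, $(A, 0)$, $(A, B, 0)$, or $(A, B, B, 0)$, each ending in a $0$-rung and contributing $\lfloor 4r/5 \rfloor \in \{0, 1, 2, 3\}$ additional occupied rungs.

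The only delicate step is verifying the constraints across the two seams introduced by the tail (periodic-to-tail and, via the wrap-around, tail-to-periodic). Since each tail and each periodic copy ends in a $0$-rung, there is enough slack to decouple the two regions; a case-by-case inspection for the four values of $r$ confirms that no forbidden $A$-$A$ outer pair nor $B$-$B$ distance-$2$ inner pair is created at either seam. This inspection is the main source of tedium but presents no conceptual obstacle, and the total count reaches $4q + \lfloor 4r/5 \rfloor = \lfloor 4n/5 \rfloor$ as required.
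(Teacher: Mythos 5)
Your argument is correct, but note that the paper does not prove this statement at all: it is imported wholesale as \cite[Theorem 2.5]{FGS2011}, so any proof you give is by necessity a different route from the paper's (which has none). What you have done is reconstruct a self-contained elementary proof, and it holds up. Your upper bound rests on the claim that among any five cyclically consecutive rungs at most four can meet an independent set; your two-branch case split (starting from $t_0=A$ and $t_0=B$) does force a contradiction in both branches, and this claim is precisely the statement the paper separately cites as \cite[Lemma 2.4]{FGS2011}, so your averaging over the $n$ windows (each rung lying in exactly $5$ of them, each occupied rung contributing one vertex because of the spoke $a_ib_i$) cleanly gives $5|I|\le 4n$. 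Your lower bound via the periodic template $(A,B,B,A,0)$ with the tails $(0)$, $(A,0)$, $(A,B,0)$, $(A,B,B,0)$ also checks out: I verified the seam conditions for all four residues, including the smallest instances $n=6,7,8,9$ where the periodic part consists of a single block, and no outer pair at distance $1$ nor inner pair at distance $2$ is created across either seam, giving $4q+\lfloor 4r/5\rfloor=\lfloor 4n/5\rfloor$ vertices. The only cosmetic gap is that you assert rather than carry out the seam inspection; since it is finite and local it is fine to leave it as a routine check, but in a written version you should at least tabulate the rung types near the two seams for each $r$. In the context of this paper your proof buys self-containedness at the cost of a page, whereas the citation keeps the focus on the new counterexample construction.
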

Note that this in particular means $\alpha\big(\gp (5k+11,2) \big) = 4k + 8$ for any $k \ge 0$.

\begin{lemma}\cite[Lemma 2.4]{FGS2011}\label{lemma:petersen_segment_of_length_5_has_independence_number_4}
    $\alpha \big( \gp (n, 2)[S_{i, 5}] \big) \leq 4$ for any  $n\geq 5$ and $ i \in \ZZ_n$.
\end{lemma}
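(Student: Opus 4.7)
The plan is to reduce to a finite case analysis on the explicit structure of the induced subgraph. By cyclic symmetry we may assume $i = 0$. Writing $U = \{a_0, a_1, a_2, a_3, a_4\}$ and $W = \{b_0, b_1, b_2, b_3, b_4\}$, for $n \geq 7$ the induced subgraph $\gp(n,2)[S_{0,5}]$ consists precisely of the outer path $a_0 a_1 a_2 a_3 a_4$, the three inner edges $b_0 b_2$, $b_2 b_4$, $b_1 b_3$, and the five rungs $a_j b_j$. For $n \in \{5,6\}$ wrap-around adds further edges (e.g.\ turning $b_0 b_2 b_4$ into a triangle), which only lowers the independence number, so it suffices to bound $\alpha$ against the edge set above.

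Given any independent set $I$, I would let $I_a = I \cap U$ and $I_b = I \cap W$ and exploit two elementary bounds: since $U$ induces $P_5$, we have $|I_a| \leq 3$ with equality only for $I_a = \{a_0, a_2, a_4\}$; and since $W$ induces the disjoint union of the path $b_0 b_2 b_4$ and the edge $b_1 b_3$, we have $|I_b| \leq 3$ with equality only for $I_b \in \{\{b_0, b_1, b_4\}, \{b_0, b_3, b_4\}\}$. These two bounds alone yield only $|I| \leq 6$, so the rung constraints must do the remaining work.

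The argument then proceeds by a three-way case split. If $|I_a| = 3$, the rungs at positions $0, 2, 4$ exclude $b_0, b_2, b_4$ from $I$, confining $I_b$ to the edge $\{b_1, b_3\}$ and forcing $|I_b| \leq 1$. If $|I_b| = 3$, in either of the two admissible configurations three rungs block three of the five outer vertices, leaving $I_a$ inside two consecutive vertices of the outer path, so $|I_a| \leq 1$. In every remaining case $|I_a| \leq 2$ and $|I_b| \leq 2$. All three subcases give $|I| \leq 4$. I expect no real obstacle here: the argument is purely mechanical once the induced edge set is written down, and the wrap-around subtleties for small $n$ actually simplify rather than complicate the analysis.
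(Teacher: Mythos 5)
Your proof is correct. Note that the paper does not prove this statement at all: it is quoted verbatim as Lemma 2.4 of \cite{FGS2011}, so there is no internal proof to compare against. What you supply is a self-contained elementary verification, and it checks out: by rotational symmetry you may take $i=0$; for $n\ge 7$ the induced subgraph on $S_{0,5}$ is exactly the outer path $a_0a_1a_2a_3a_4$, the spokes $a_jb_j$, and the inner edges $b_0b_2$, $b_2b_4$, $b_1b_3$; and for $n\in\{5,6\}$ the induced subgraph is a supergraph of this on the same vertex set, so the bound transfers. Your structural claims about extremal independent sets are right (the unique size-$3$ independent set in the $P_5$ is $\{a_0,a_2,a_4\}$, and the only size-$3$ sets in $P_3\cup K_2$ on the $b$-side are $\{b_0,b_1,b_4\}$ and $\{b_0,b_3,b_4\}$), and the three-way case split via the spoke constraints correctly yields $|I|\le 4$ in every case. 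The only thing your write-up buys beyond the paper is independence from the external reference; conversely, the paper's citation covers the statement for all $n\ge 5$ without any case work, which is all it needs.
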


\begin{remark}\label{rmk:edgecol}
    Castagna and Prins~\cite{Castagna_Prins_1972} showed that all generalized Petersen graphs except $\gp(5,2)$ have a Tait coloring, that is a $3$-edge-coloring where each color is incident to each vertex. In particular, this implies that they are $3$-edge-colorable.
\end{remark}

We will also need the following lemma, the intuition behind which is simple: every edge of $\gp (n,2)$ is contained in a segment of at most $3$ consecutive rungs, so after removing $2$ edges we are left with at least $n-6$ complete rungs, which must contain at least one large enough consecutive segment.

\begin{lemma}\label{lemma:petersen_find_free_consec_rungs}
    Given any two edges $uv, wx \in E_{\gp (n, 2)}$, there exists some $i_0 \in \ZZ_n$ such that $\{u, v, w, x\} \cap S_{i_0, 10} = \emptyset$ as long as $n \geq 26$.
\end{lemma}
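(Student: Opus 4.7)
The plan is to exploit the observation that every edge in $\gp(n,2)$ is contained in a short window of consecutive rungs: a spoke $a_i b_i$ lies in $S_{i,1}$, an outer-cycle edge $a_i a_{i+1}$ in $S_{i,2}$, and an inner-cycle edge $b_i b_{i+2}$ in $S_{i,3}$. Hence for the two given edges $uv$ and $wx$ one can fix indices $j_1, j_2 \in \ZZ_n$ and lengths $\ell_1, \ell_2 \in \{1,2,3\}$ with $\{u,v\} \subseteq S_{j_1,\ell_1}$ and $\{w,x\} \subseteq S_{j_2,\ell_2}$. The problem then reduces to a purely combinatorial question on the index cycle $\ZZ_n$: find $10$ consecutive indices avoiding the two intervals $I_t := \{j_t, j_t+1, \ldots, j_t + \ell_t - 1\}$ for $t \in \{1,2\}$.

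Since $|I_1 \cup I_2| \le \ell_1 + \ell_2 \le 6$, removing $I_1 \cup I_2$ from the cycle $\ZZ_n$ leaves a set of at least $n - 6 \ge 20$ indices distributed among at most two arcs, since removing two intervals from a cycle produces at most two gap-arcs (and just one if they overlap or are adjacent). By pigeonhole, the longer of these arcs has length at least $\lceil (n-6)/2 \rceil \ge 10$ whenever $n \ge 26$, and any starting index $i_0$ within that arc gives a segment $S_{i_0, 10}$ disjoint from $\{u,v,w,x\}$. The only step requiring care is the bookkeeping for how $I_1$ and $I_2$ can sit on $\ZZ_n$, but every degenerate configuration (overlapping, adjacent, or nested intervals) only strengthens the pigeonhole bound, so the generic two-arc case already dictates the $n \geq 26$ threshold and no genuine obstacle arises.
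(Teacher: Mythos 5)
Your argument is correct and is essentially the paper's approach: the paper states exactly this intuition (each edge occupies at most three consecutive rungs, so two edges block at most six rungs, leaving a long run of free rungs for $n\geq 26$), and its formal proof just instantiates it by normalizing one edge to rung $1$ and case-splitting on which half of the cycle the other edge lies in, while you phrase the same count as a pigeonhole over the at most two complementary arcs. The only nitpick is that $i_0$ must be chosen so that the entire window $\{i_0,\ldots,i_0+9\}$ lies inside the long arc (e.g.\ its first index), not an arbitrary index of that arc, but this is immediate since the arc has length at least $10$.
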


\begin{proof}
    Given a vertex $z \in \gp (n,2)$, let $r(z)$ denote the \emph{rung of $z$}, that is, the $i \in \ZZ_n$ such that $z \in R_i$.
    Note that we will imply a natural order $1 < 2 < \ldots < n$ on the rungs.
    We may w.l.o.g. assume that (i) $\min\{r(u), r(v), r(w), r(x) \} = r(u) = 1$, (ii) $r(v) \in \{1, 2, 3\}$, and (iii) $r(w) \leq r(x)$, that is $r(x) \in \{r(w), r(w)+1, r(w)+2\}$.
    Let
    \begin{equation*}
         I = \left\{\begin{array}{lr}
         \{4, \ldots, \lfloor n/2 \rfloor \}& \text{if } r(w) > \lfloor n/2  \rfloor,\\
        \{ \lfloor n/2 \rfloor + 3, \ldots, n\} & \text{if } r(w) \le \lfloor n/2 \rfloor.
        \end{array}\right.
    \end{equation*}
    In both cases, we have $r(u) \notin I$ by (i), $r(v) \notin I$ by (ii), $r(w) \notin I$ by definition of $I$, and $r(x) \notin I$ by (iii).
    Since $|I| \geq 10$ in either case as long $n \geq 26$, we can simply take $i_0 = 4$ in the first and $i_0 = \lfloor n/2\rfloor + 3$ in the second case.
\end{proof}

The main step is encapsulated in the following theorem, which verifies the independence set condition of \autoref{lema:indpnumbercondition}.

\begin{theorem}\label{thm:petersen_are_counterexamples}
    Given any two edges $uv, wx \in E_{\gp (5k + 11, 2)}$ for $k \ge 0$, we have
    \begin{equation*}
        \alpha(\gp (5k + 11, 2) -\{u, v, w, x\}) = \alpha(\gp (5k + 11, 2)).
    \end{equation*}
\end{theorem}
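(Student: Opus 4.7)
Since $\alpha(\gp(5k+11,2)-\{u,v,w,x\})\le\alpha(\gp(5k+11,2))$ is immediate, the task reduces to exhibiting a maximum independent set $I$ of $\gp(5k+11,2)$, of size $4k+8$, that avoids $\{u,v,w,x\}$.

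The plan is as follows. Provided that $n=5k+11\ge 26$, I will first invoke \autoref{lemma:petersen_find_free_consec_rungs} to produce a \emph{free window} $S_{i_0,10}$ of ten consecutive rungs containing no endpoint. Writing $n=5(k+2)+1$, I then decompose the cyclic sequence of rungs into $k+2$ consecutive blocks of $5$ rungs together with a single leftover rung, positioning this decomposition so that two of the $5$-blocks lie entirely inside the free window and the leftover rung lies outside it. By \autoref{lemma:petersen_segment_of_length_5_has_independence_number_4}, every such block admits an independent set of size $4$, so the union over the $k+2$ blocks will have the required size $4(k+2)=4k+8$. In the two ``free'' blocks any extremal size-$4$ set works; in the remaining $k$ blocks I will choose extremal sets that avoid any endpoints in that block. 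Such a local choice is feasible, since a $5$-block admits several distinct extremal independent sets (parametrised by which of the five rungs is left empty and, in each non-empty rung, by whether the $a$- or $b$-vertex is chosen), whereas only four rungs in total contain endpoints.

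The main obstacle I anticipate is proving global independence of the union, because the inner edges $b_ib_{i+2}$ span two rungs and therefore straddle block interfaces, potentially producing cross-block conflicts. I plan to address this either by a short case analysis on interface transitions, or more cleanly by first checking that the globally periodic template
\begin{equation*}
I_0 \;=\; \bigcup_{j=0}^{k+1}\{a_{5j},\,a_{5j+2},\,b_{5j+3},\,b_{5j+4}\},
\end{equation*}
with leftover rung $5k+10$, is an independent set of size $4k+8$ (including at the wraparound), and then cyclically shifting $I_0$ and performing small local modifications in the $O(1)$ blocks that still contain endpoints. The ten-rung buffer guarantees that such shifts and modifications do not disturb the chosen vertices inside $S_{i_0,10}$.

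Finally, the small cases $k\in\{0,1,2\}$, where $n\in\{11,16,21\}<26$ and \autoref{lemma:petersen_find_free_consec_rungs} does not directly apply, will need a separate verification, either by adapting the argument with a smaller buffer of consecutive free rungs, or by direct (possibly computer-assisted) enumeration over the finitely many edge pairs.
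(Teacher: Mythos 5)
Your opening reduction (it suffices to exhibit a maximum independent set of size $4k+8$ avoiding $\{u,v,w,x\}$) and your use of \autoref{lemma:petersen_find_free_consec_rungs} to obtain a ten-rung window free of endpoints agree with the paper, but the heart of your construction has a genuine gap. Note first that \autoref{lemma:petersen_segment_of_length_5_has_independence_number_4} is only an upper bound, so it does not supply a size-$4$ independent set in each block; more seriously, your feasibility claim for the $k$ blocks containing endpoints is false. Since the window avoids all endpoints, all four endpoints lie in those remaining blocks, and your count is exactly tight: every one of the $k+2$ blocks must contribute exactly $4$ vertices and the leftover rung $0$. But a block need not admit any size-$4$ independent set avoiding the endpoints it contains. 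Take the block on rungs $i,\dots,i+4$ and the edges $uv=a_ia_{i+1}$, $wx=b_ib_{i+2}$: the six remaining vertices $a_{i+2},a_{i+3},a_{i+4},b_{i+1},b_{i+3},b_{i+4}$ carry the perfect matching $a_{i+2}a_{i+3}$, $a_{i+4}b_{i+4}$, $b_{i+1}b_{i+3}$, so at most $3$ of them are pairwise non-adjacent. The same happens if the block instead starts one rung earlier (rungs $i-1,\dots,i+3$: use the matching $a_{i-1}b_{i-1}$, $a_{i+2}a_{i+3}$, $b_{i+1}b_{i+3}$), and once two blocks are pinned to the window the only remaining freedom is the position of the single leftover rung, which shifts boundaries by at most one; so this flexibility does not rescue the plan. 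On top of that, the cross-block independence (the $b_jb_{j+2}$ and boundary $a$-edges) is only addressed by the vague ``shift the template and modify locally'', which is precisely where the remaining work lies, since a modification inside one block can clash with the unmodified template in its neighbours.

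For comparison, the paper avoids all per-block feasibility claims by arguing inductively: assuming the statement for $k$, it cuts five rungs out of the endpoint-free window of $\gp(5(k+1)+11,2)$ and reglues to obtain a copy of $\gp(5k+11,2)$, applies the induction hypothesis there to get an independent set $\tilde I$ of size $4k+8$ avoiding all four endpoints at once, then uses \autoref{lemma:petersen_segment_of_length_5_has_independence_number_4} as a pigeonhole statement to find a rung $R_{i_1}$ in the window untouched by $\tilde I$, and reinserts the five rungs next to it populated with the fixed pattern $a_{i_1+1},b_{i_1+2},b_{i_1+3},a_{i_1+4}$; only the handful of edges at the two seams need checking. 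If you want a non-inductive block construction along your lines, you would need either slack in the counting (e.g.\ letting the leftover rung compensate a deficient block) or a proof that block boundaries can always be chosen to make every block feasible and mutually compatible; as written, neither is established. Your treatment of the small cases $k\in\{0,1,2\}$ by direct verification matches the paper.
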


\begin{proof} 
    The cases $k = 0, 1, 2$ are easily verified computationally.\footnote{The necessary code for this can be found online at \href{https://github.com/FordUniver/agps-lovasz-2025/}{github.com/FordUniver/agps-lovasz-2025}.} We will proceed inductively, that is we assume that the statement holds for some $k \geq 2$ and show that it then also holds for $k+1$. Let $n = 5(k+1) + 11$ and $uv, wx \in E_{\gp (n, 2)}$ be arbitrary but fixed.  Since $k \ge 2$ we have $n \geq 26$ and can apply \autoref{lemma:petersen_find_free_consec_rungs}, that is we can find an $i_0 \in \ZZ_n$ such that $\{u, v, w, x\} \cap S_{i_0, 10} = \emptyset$. Let us w.l.o.g. assume that $i_0 = n - 9$.

    Consider the graph $\tilde G$ with vertex set $\bigcup_{i = 1}^{n-5} R_i$ and edges induced by $\gp (n, 2)$ together with $a_{n-5}a_1$, $b_{n-5}b_2$, and $b_{n-6}b_1$. This means we removed the last $5$ rungs of $G$ and {\lq}glued{\rq} it back together to again form a generalized Petersen graph, that is $\tilde G \cong \gp (5k+11, 2)$. We can apply the inductive hypothesis and get an independent set $\tilde I \subseteq V(\tilde G)$ satisfying $\{u, v, w, x\} \cap \tilde I = \emptyset$ and $\vert \tilde I \vert = 4k + 8$.
    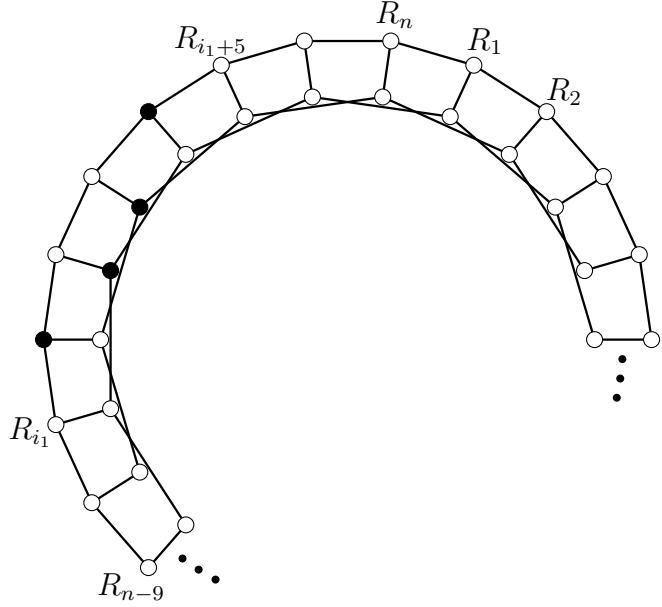
\begin{figure}[htp]
    \centering
    \begin{tikzpicture}[scale=0.5]
      \def\N{22}        
      \def\step{2}      
      \def\Rout{8}      
      \def\Rin{6.5}     
    
      \tikzset{
        VertexStyle/.style = {shape=circle, draw,
                              minimum size = 6pt, inner sep = 0pt}
      }
    
      \foreach \i in {0,...,14} {%
        \pgfmathsetmacro\theta{360/\N*\i}
    
        \pgfmathsetmacro\xO{\Rout*cos(\theta)}
        \pgfmathsetmacro\yO{\Rout*sin(\theta)}
        \Vertex[NoLabel,x=\xO cm,y=\yO cm]{u\i}
    
        \pgfmathsetmacro\xI{\Rin*cos(\theta)}
        \pgfmathsetmacro\yI{\Rin*sin(\theta)}
        \Vertex[NoLabel,x=\xI cm,y=\yI cm]{v\i}
      }
    
        \pgfmathsetmacro\theta{360/\N*14}
        \pgfmathsetmacro\xO{(\Rout+0.7)*cos(\theta)}
        \pgfmathsetmacro\yO{(\Rout+0.7)*sin(\theta)}
        \node at (\xO,\yO)   (a) {$R_{n-9}$};
        
        \pgfmathsetmacro\theta{360/\N*12}
        \pgfmathsetmacro\xO{(\Rout+0.7)*cos(\theta)}
        \pgfmathsetmacro\yO{(\Rout+0.7)*sin(\theta)}
        \node at (\xO,\yO)   (a) {$R_{i_1}$};
    
        \pgfmathsetmacro\theta{360/\N*7}
        \pgfmathsetmacro\xO{(\Rout+0.7)*cos(\theta)}
        \pgfmathsetmacro\yO{(\Rout+0.7)*sin(\theta)}
        \node at (\xO,\yO)   (a) {$R_{i_1+5}$};
    
        \pgfmathsetmacro\theta{360/\N*5}
        \pgfmathsetmacro\xO{(\Rout+0.7)*cos(\theta)}
        \pgfmathsetmacro\yO{(\Rout+0.7)*sin(\theta)}
        \node at (\xO,\yO)   (a) {$R_{n}$};
    
        \pgfmathsetmacro\theta{360/\N*4}
        \pgfmathsetmacro\xO{(\Rout+0.7)*cos(\theta)}
        \pgfmathsetmacro\yO{(\Rout+0.7)*sin(\theta)}
        \node at (\xO,\yO)   (a) {$R_{1}$};
    
        \pgfmathsetmacro\theta{360/\N*3}
        \pgfmathsetmacro\xO{(\Rout+0.7)*cos(\theta)}
        \pgfmathsetmacro\yO{(\Rout+0.7)*sin(\theta)}
        \node at (\xO,\yO)   (a) {$R_{2}$};
    
    
    \tikzset{VertexStyle/.style = {shape = circle,fill = black,minimum size = 6pt, inner sep = 0pt}}
    
        \pgfmathsetmacro\theta{360/\N*11}
        \pgfmathsetmacro\xO{\Rout*cos(\theta)}
        \pgfmathsetmacro\yO{\Rout*sin(\theta)}
        \Vertex[x=\xO cm,y=\yO cm]{}
    
        \pgfmathsetmacro\theta{360/\N*10}
        \pgfmathsetmacro\xO{\Rin*cos(\theta)}
        \pgfmathsetmacro\yO{\Rin*sin(\theta)}
        \Vertex[x=\xO cm,y=\yO cm]{}
    
        \pgfmathsetmacro\theta{360/\N*9}
        \pgfmathsetmacro\xO{\Rin*cos(\theta)}
        \pgfmathsetmacro\yO{\Rin*sin(\theta)}
        \Vertex[x=\xO cm,y=\yO cm]{}
    
        \pgfmathsetmacro\theta{360/\N*8}
        \pgfmathsetmacro\xO{\Rout*cos(\theta)}
        \pgfmathsetmacro\yO{\Rout*sin(\theta)}
        \Vertex[x=\xO cm,y=\yO cm]{}
        
      \foreach \i in {0,...,12} {%
        \pgfmathtruncatemacro\next{mod(\i+1,\N)}   
        \pgfmathtruncatemacro\jump{mod(\i+\step,\N)}
    
        \Edge[lw=0.03cm](u\i)(u\next)
    
        \Edge[lw=0.03cm](u\i)(v\i)
    
        \Edge[lw=0.03cm](v\i)(v\jump)
      }
    
    \Edge[lw=0.03cm](u13)(u14)
    \Edge[lw=0.03cm](u13)(v13)
    \Edge[lw=0.03cm](u14)(v14)

    \tikzset{VertexStyle/.style = {shape = circle,fill = black,minimum size = 3pt, inner sep = 0pt}}

    \pgfmathsetmacro\theta{360/(\N)*(\N-1+0.25)}
        \pgfmathsetmacro\xO{(\Rout+\Rin)*cos(\theta)/2}
        \pgfmathsetmacro\yO{(\Rout+\Rin)*sin(\theta)/2}
        \Vertex[x=\xO cm,y=\yO cm]{}
    \pgfmathsetmacro\theta{360/(\N)*(\N-1+0.5)}
        \pgfmathsetmacro\xO{(\Rout+\Rin)*cos(\theta)/2}
        \pgfmathsetmacro\yO{(\Rout+\Rin)*sin(\theta)/2}
        \Vertex[x=\xO cm,y=\yO cm]{}
    \pgfmathsetmacro\theta{360/(\N)*(\N-1+0.75)}
        \pgfmathsetmacro\xO{(\Rout+\Rin)*cos(\theta)/2}
        \pgfmathsetmacro\yO{(\Rout+\Rin)*sin(\theta)/2}
        \Vertex[x=\xO cm,y=\yO cm]{}
    
        \pgfmathsetmacro\theta{360/(\N)*(14+0.25)}
        \pgfmathsetmacro\xO{(\Rout+\Rin)*cos(\theta)/2}
        \pgfmathsetmacro\yO{(\Rout+\Rin)*sin(\theta)/2}
        \Vertex[x=\xO cm,y=\yO cm]{}
    \pgfmathsetmacro\theta{360/(\N)*(14+0.5)}
        \pgfmathsetmacro\xO{(\Rout+\Rin)*cos(\theta)/2}
        \pgfmathsetmacro\yO{(\Rout+\Rin)*sin(\theta)/2}
        \Vertex[x=\xO cm,y=\yO cm]{}
    \pgfmathsetmacro\theta{360/(\N)*(14+0.75)}
        \pgfmathsetmacro\xO{(\Rout+\Rin)*cos(\theta)/2}
        \pgfmathsetmacro\yO{(\Rout+\Rin)*sin(\theta)/2}
        \Vertex[x=\xO cm,y=\yO cm]{}
      
    \end{tikzpicture}
    \caption{Illustration for the proof of \autoref{thm:petersen_are_counterexamples}. Note that by construction of the independent set $I$ no vertex from $R_{i_1}$ or $R_{i_1+5}$ is selected for $I$. Hence the four marked vertices defined by \ref{edgesSi0ff} are not adjacent to any vertex in $I$.}
    \label{fig:proofillustr}
    \end{figure}
    By \autoref{lemma:petersen_segment_of_length_5_has_independence_number_4}, $\tilde I$ can only intersect at most 4 of the sets $R_{n-9}, \dots, R_{n-5}$. Let $i_1 \in [n-9, n-5]$ be such that $R_{i_1} \cap \tilde I = \emptyset$. We will construct an independent set $I \subseteq V(G)$ of size $\vert I \vert = 4k + 12$ in the following way:
    \begin{enumerate}[label = (\roman*)]\itemsep0pt
        \item For all $z \in \tilde I$ such that $z \in S_{1, i_1}$, add $z$ to $I$.\label{edgesS1i0}
        \item For all $a_i \in \tilde I$ such that $i > i_1$, add $a_{i+5}$ to $I$.\label{edgesenda}
        \item For all $b_i \in \tilde I$ such that $i > i_1$, add $b_{i+5}$ to $I$.\label{edgesendb}
        \item Add $a_{i_1 + 1}$, $b_{i_1 + 2}$, $b_{i_1 + 3}$ and $a_{i_1 + 4}$ to $I$.\label{edgesSi0ff}
    \end{enumerate}
    Since $\{u,v,w,x\}\subseteq S_{1,i_1}$, $\tilde{I}\cap\{u,v,w,x\}=\emptyset$ and $I\cap S_{1,i_1}=\tilde{I}\cap S_{1,i_1}$ we have $I \cap \{u, v, w, x\} = \emptyset$. Furthermore, $I$ is an independent set in $G$. Indeed, $I\cap S_{1,i_1}$ is an independent set in $G[S_{1,i_1}]$, since it only includes vertices chosen in \ref{edgesS1i0}. $I\cap S_{i_1,5}$ is an independent set in $G[S_{i_1,5}]$, since it only includes vertices chosen in \ref{edgesSi0ff}. $I\cap S_{i_1+5,(n-i_1-5+1)}$ is an independent set in $G[S_{i_1+5,(n-i_1-5+1)}]$, since it only includes vertices chosen in \ref{edgesenda} and \ref{edgesendb}. Therefore the only crucial edges left to check are the nine edges connecting the three segments. Since $R_{i_1}\cap I=\emptyset$ and $b_{i_1+1}\notin I$ none of the three edges $a_{i_1}a_{i_1+1},b_{i_1}b_{i_1+2}$, and $b_{i_1-1}b_{i_1+1}$ are contained in $I$. Similarly, since $R_{i_1+5}\cap I=\emptyset$ and $b_{i_1+4}\notin I$ none of the three edges $a_{i_1+5}a_{i_1+6},b_{i_1+5}b_{i_1+7}$, and $b_{i_1+4}b_{i_1+6}$ are contained in $I$, see also Figure~\ref{fig:proofillustr}. Lastly, by \ref{edgesenda} and \ref{edgesendb} none of the edges $a_na_1,b_{n-1}b_1$, and $b_{n}b_2$ is contained in $I$, as otherwise one of the edges $a_{n-5}a_1$, $b_{n-6}b_1$, and $b_{n-5}b_2$ would be contained in $\tilde{I}$ contradicting that $\tilde{I}$ is independent in $\tilde{G}$.
    Thus $\alpha(\gp (n,2) - \{u, v, w, x\}) \geq \vert I \vert = 4k + 12 = \alpha(\gp (n,2))$. Since deleting vertices can only decrease the independence number, we conclude that $\alpha(\gp (n,2) - \{u, v, w, x\}) = \alpha(\gp (n,2))$.
\end{proof}

\begin{proof}[Proof of \autoref{thm:main}]
    $\gp (5k+11, 2)$ is $3$-edge-colorable by \autoref{rmk:edgecol} and the condition of \autoref{lema:indpnumbercondition} is satisfied by \autoref{thm:petersen_are_counterexamples}, making its line hypergraph a valid counterexample to \autoref{conj:Lovasz} for any $k \geq 0$.
\end{proof}

\begin{table}[ht]
  \centering
  \caption{Overview of counterexamples for $r = 3$. $\bG^{(36)}_{3}$ 
  has generators $f_1$ of order $4$, $f_2$ of order $3$, $f_3$ of order $2$, and $f_4$ of order $3$.}
  \label{tab:r3examples}
  \begin{tabular}{@{} {l} T{8.2cm} @{}}
    \toprule
    \textbf{description} & \textbf{properties} \\ \midrule
    Biggs-Smith~\cite{CHM2025} & order $102$, diameter $7$, girth $9$, edge- and distance-transitive, Hamiltonian\smallskip \\ 
    {\tt F168D}~\cite{CHM2025} & order $168$, diameter $9$, girth $7$, edge-transitive, Hamiltonian\smallskip \\ 
    see \autoref{fig:counterxample36vertices} & order $36$, diameter $5$, girth $7$, Cayley graph in $\bG^{(36)}_{39}$, Hamiltonian\smallskip \\ 
    $\gp (19, 7)$ & order $38$, diameter $5$, girth $7$, Hamiltonian \smallskip\\
    $\gp (20, 6)$ & order $40$, diameter $6$, girth $7$, Hamiltonian \smallskip\\ 
    $\gp (25, 9)$ & order $50$, diameter $6$, girth $7$, Hamiltonian \smallskip \\ 
    $\gp (26, 8)$ & order $52$, diameter $7$, girth $7$, Hamiltonian \smallskip \\ 
    $\gp (31, 11)$ & order $62$, diameter $7$, girth $7$, Hamiltonian \smallskip \\ 
    $\gp (32, 10)$ & order $64$, diameter $8$, girth $7$, Hamiltonian \smallskip \\ 
    $\cgp (2, \bG^{(36)}_{3}; f_2 \, f_3 \, f_4, f_1 \, f_2; f_2^2, f_2)$ & order $72$, diameter $6$, girth $6$, Hamiltonian \smallskip \\ 
    $\gp (37, 17)$ & order $74$, diameter $8$, girth $7$, Hamiltonian \smallskip \\ 
    $\gp (38, 12)$ & order $76$, diameter $9$, girth $7$, Hamiltonian \smallskip \\ 
    $\gp (43, 15)$ & order $86$, diameter $9$, girth $7$, Hamiltonian \smallskip \\ 
    $\gp (44, 14)$ & order $88$, diameter $10$, girth $7$, Hamiltonian \smallskip \\ 
    $\gp (5k + 11, 2)$ & order $10(k+2) + 2$, Hamiltonian iff $n \not\equiv 5 \mod 6$~\cite[Theorem 1]{alspach1983classification} \smallskip \\ 
    \bottomrule
  \end{tabular}
\end{table}

\section{Additional counterexamples}\label{sec:additional}

Let us now describe the additional counterexamples beyond those given in the previous section.
The properties of all of the graphs have been verified computationally.
The code needed to do so along with {\tt graph6} descriptions of the graphs in this section can be found online at \href{https://github.com/FordUniver/agps-lovasz-2025/}{github.com/FordUniver/agps-lovasz-2025}.
To describe them, let us define the following family of $3$- and $4$-regular graphs.

\begin{definition} \label{def:hypergp}
     Given $m \ge 2$, some finite multiplicatively-written group $\bG$, elements $k_i\in \bG \setminus \{1\}$ with $k_i \ne (k_i)^{-1}$, and $c_i \in \bG$ for $i \in \ZZ_m$, the \emph{Cayley-generalized Petersen graph} $\cgp (m, \bG; k_i; c_i)$ has vertex set $\ZZ_m \times \bG$ and connects vertex $(i, j)$ both to $(i, j \,  k_i)$ and $(i + 1, j \, c_i)$ for any $i \in \ZZ_m$ and $j \in \bG$.
\end{definition}

Using this definition, we can list our results for $r = 3$ in \autoref{tab:r3examples} and those for $r = 4$ in \autoref{tab:r4examples}.  Note that we denote the $i$-th small group of order $n$ by $\bG^{(n)}_i$ and write it multiplicatively.

\begin{table}[ht]
  \centering
  \caption{Overview of counterexamples for $r = 4$. The groups $\ZZ_{28}$, $\ZZ_{35}$ and $\ZZ_{51}$ are written additively. The group $\bG^{(42)}_{3}$ is $\operatorname{C}_7 \times S_3$ and has generators $f_1$ of order $2$, $f_2$ of order $7$, and $f_3$ of order $3$. The group $\bG^{(42)}_{4}$ is $\operatorname{C}_3 \times \operatorname{D}_{14}$ and has generators $f_1$ of order $2$, $f_2$ of order $3$, and $f_3$ of order $7$. The group $\bG^{(48)}_{29}$ is $\operatorname{GL}(2,3)$ and has generators $f_1$ of order $2$, $f_2$ of order $3$, $f_3$ of order $4$, $f_4$ of order $4$, and $f_5$ of order $2$. The group $\bG^{(48)}_{32}$ is $\operatorname{C}_2 \times \operatorname{GL}(2,3)$ and has generators $f_1$ of order $2$, $f_2$ of order $3$, $f_3$ of order $4$, $f_4$ of order $4$, and $f_5$ of order 2.}
  \label{tab:r4examples}
  \begin{tabular}{@{}  {l} T{8cm} @{}}
    \toprule
    \textbf{description} & \textbf{properties} \\ \midrule
    $\cgp (2,\ZZ_{35}; 5,7; 15, 0)$, see \autoref{fig:counterxample70vertices} & order $70$, diameter $5$, girth $5$, Hamiltonian\smallskip \\ 
    $\cgp (2, \bG^{(42)}_{3}; f_2 \, f_3, f_2^2 \, f_3; f_1 \, f_2, 1)$ & order $84$, diameter $5$, girth $5$, Hamiltonian \smallskip\\
    $\cgp (2, \bG^{(42)}_{4}; f_2 \, f_3, f_2 \, f_3^2; f_1 \, f_2, 1)$ & order $84$, diameter $5$, girth $5$, Hamiltonian \smallskip\\ 
    $\cgp (3, \ZZ_{28}; 18, 19, 5; 18, 0, 14)$ & order $84$, diameter $6$, girth $5$, Hamiltonian \smallskip \\ 
    $\cgp (2, \bG^{(48)}_{29}; f_2 \, f_5, f_2 \, f_4; f_1 \,f_4, 1)$ & order $96$, diameter $5$, girth $6$, Cayley graph in $\bG^{(96)}_{193}$, Hamiltonian \smallskip \\
    $\cgp(2, \bG^{(48)}_{32}; f_1 f_2, f_1 f_2 f_4; f_1 f_2 f_2^2 \, f_3)$ & order $96$, diameter $5$, girth $6$, Cayley graph in $\bG^{(96)}_{200}$, Hamiltonian \smallskip \\  
    $\cgp (2, \ZZ_{51}; 3, 26 ; 47, 0 )$ & order $102$, diameter $6$, girth $5$, Hamiltonian \smallskip \\
    \bottomrule
  \end{tabular}
\end{table}

Let us also give some additional motivation for \autoref{def:hypergp} as well as a description of how we computationally explored constructions based on it.
Clow, Haxell, and Mohar~\cite{CHM2025} used {\tt nauty}~\cite{mckay1981practical, mckay2014practical} to verify that no cubic graph on at most $20$ vertices yields a counterexample to Lov\'asz' conjecture. They additional checked all cubic edge-transitive graphs of order up to $300$~\cite{conder2002trivalent, conder2006census, conder2025edge} and determined that the Bigs-Smith graph and {\tt F168D} are the only two such graphs satisfying the requirements of \autoref{lema:indpnumbercondition}.

We initially verified whether the property of being edge-transitive was relevant and if counterexamples based on Cayley graphs might exist by checking the library of vertex-transitive graphs of order at most $47$~\cite{holt2020census}, which yielded the cubic $36$-vertex example given in \autoref{fig:counterxample36vertices}.
%
%
The library contains no quartic example. Vertex-transitive graphs of order $48$ have been determined~\cite{holt2022transitive} but do not appear easily available.
We then checked cubic graphs of order $22$ using {\tt nauty}, which yielded the cubic $22$-vertex example given in \autoref{fig:counterxample22vertices}. 
%
%
The House of Graphs database~\cite{Coolsaet2023HoG2} identified it as $\gp (11, 2)$ and searching through generalized Petersen graphs suggested the family described in \autoref{sec:gpproof}, but did also yield further examples, see \autoref{tab:r3examples}.

Based on this, a natural place to look for further counterexamples to $r = 3$ as well as counterexamples for higher uniformities are therefore alternative and even wider reaching generalizations of Petersen graphs. The most notable of these in the literature are permutation graphs and supergeneralized Petersen graphs.

\begin{definition}[Permutation graphs~\cite{chartrand1967planar, ringeisen1984cycle}]
    For a given graph $G$ and a permutation $\alpha$ of its vertices, the \emph{permutation graph} $\p(G, \alpha)$ consists of two disjoint copies $G_1$ and $G_2$ of $G$ where vertex $v \in G_1$ is connected to $\alpha(v) \in G_2$ by an edge. If $G$ is a cycle on $n$ vertices, then the resulting graph is called a \emph{cycle permutation graph} and denoted by $\cyp (n, \alpha)$.
\end{definition}

\begin{definition}[Supergeneralized Petersen graphs~\cite{saravzin2007generalizing}]
    Given integers $m \ge 2$, $n \ge 3$, and $k_i\in \ZZ_n \setminus \{0\}$ with $k_i \ne -k_i$ for $i \in \ZZ_m$, the \emph{supergeneralized Petersen graph} $\sgp (m, n; k_i)$ has vertex set $\ZZ_m \times \ZZ_n$ and connects the vertex $(i, j )$ both to $(i, j + k_i )$ and to $(i + 1, j )$ for any $i \in \ZZ_m$ and $j \in \ZZ_n$.
\end{definition}

\begin{remark}
    Note that for $d$-regular $G$ and any permutation, the permutation graph is $(d+1)$-regular. In particular, cycle permutation graphs are $3$-regular. Supergeneralized Petersen on the other hand are either $3$-regular if $m = 2$ and $4$-regular for $m \ge 3$. Clearly $\gp (n, k) = \sgp (2, n; 1, k)$ for any $n$ and $k$ and $\gp (n, k) = \cyp (n, \alpha)$ with $\alpha(x) = kx$ if $k$ and $n$ are co-prime.
\end{remark}

Permutation graphs are likely to be a very good source of counterexamples to Lovasz' conjecture, but proved computationally too expensive to explore without any additional assumptions. Supergeneralized Petersen graphs can be easily explored computationally, but proved too restrictive and yielded no counterexamples not already covered by generalized Petersen graphs. \autoref{def:hypergp} constitutes a middle ground between the two, and we explored it by implementing an exhaustive generation of these constructions for given $m$ and $\bG$ in Python, relying on {\tt SageMath}~\cite{SageMath2025} and in particular the small groups library~\cite{SmallGrp} in {\tt GAP}~\cite{GAP2024} as well as {\tt cliquer}~\cite{Cliquer} to check the conditions of \autoref{lema:indpnumbercondition} and {\tt bliss}~\cite{JunttilaKaski:ALENEX2007, JunttilaKaski:TAPAS2011} for graph canonization. We mostly explored $m = 2$ with groups of order up to around $50$ as well as $m = 3$ with groups of order up to around $30$.
A brief exploratory search for $m = 4$ and $m = 5$ did not yield anything interesting. 

\section{Concluding remarks}\label{sec:conclusion}


Clow, Haxell, and Mohar~\cite[Conjecture 4.2]{CHM2025} already asked if a weaker version of the conjecture holds that still would imply Ryser's conjecture, i.e, if one can always find $k \, (r-1)$ vertices for some $1 \le k \le r-1$ whose removal decreases the matching number by at least $k$. It is unclear at this point if any of the counterexamples described here contradict this weakened version of Lov\'asz' conjecture. We verified computationally that it still holds for some $r = 3$ counterexamples of smaller order. Going to graphs of higher order or considering the counterexamples for $r = 4$ may yield different results, but doing so computationally does not seem feasible and our understanding of these constructions at this point is insufficient to decide if the conjecture holds for them purely theoretically.

\subsection*{Acknowledgements}

We thank Patrick Morris for bringing this problem to our attention. This research has been partially supported by the Spanish Ministry of Science, Innovation and Universities grant PID2023-147202NB-I00.
\noindent Aida Abiad is supported by NWO (Dutch Research Council) through the grants VI.Vidi.213.085 and OCENW.KLEIN.475.

\noindent The research of Frederik Garbe has been partially supported by the Deutsche \linebreak Forschungsgemeinschaft (DFG, German Research Foundation) through project no.\ 428212407. 
\noindent Xavier Povill is supported by the Grant PID2023-147202NB-I00 funded by MICIU/AEI/10.13039/501100011033, as well as a FPI-UPC grant from Universitat Politècnica de Catalunya and
Banco Santander.

\printbibliography

\end{document}